\documentclass[11pt]{article}

\makeindex
\usepackage[all]{xy}
\usepackage{graphicx}

\usepackage{amsmath}
\usepackage{amsfonts}
\usepackage{amssymb}
\usepackage{amsthm}

\newcommand{\isom}{\cong}

\newcommand{\Z}{{\bf{Z}}}

\newcommand{\Q}{{\bf{Q}}}

\newcommand{\F}{{\bf{F}}}
\newcommand{\T}{{\bf{T}}}

\newcommand{\tensor}{\otimes}

\newcommand{\ra}{{\rightarrow}}

 1
\DeclareFontEncoding{OT2}{}{} 
  
\newcommand{\Fp}{{{\F}_p}}
\newcommand{\Fpbar}{{\overline{\F}}_p}

\newcommand{\pp}{{\mathfrak{p}}}

\newcommand{\OO}{{\mathcal{O}}}
\newcommand{\tO}{{\widetilde{\OO}}}
\newcommand{\tK}{{\widetilde{K}}}
\newcommand{\Of}{{\OO}}

\usepackage{bm}
\bmdefine\bmu{\mu}

\newcommand{\comment}[1]{}

\newtheorem{lem}{Lemma}[section]
\newtheorem{cor}[lem]{Corollary}
\newtheorem{prop}[lem]{Proposition}

\newtheorem{thm}[lem]{Theorem}

\theoremstyle{definition}

\newcommand{\thetitle}
{The index of a certain quotient of the Hecke algebra in its normalization}

\begin{document}
\parindent=2em

\title{\thetitle}
\author{Amod Agashe}

\date{}
\maketitle

\begin{abstract}
Let $\Gamma$ be a congruence subgroup of~${\rm SL}_2(\Z)$, 
and let $f$ 
be a normalized eigenform of weight~$k$ on~$\Gamma$. 
Let $K$ denote the number field 
generated over~$\Q$ by the Fourier coefficients of~$f$.
Let $R$ denote the 
the order in~$K$ generated 
by the Fourier coefficients of~$f$, which is contained
in the ring of integers~$\OO$ of~$K$.
We relate the primes that divide the index of~$R$ in~$\OO$
to primes~$p$ such that 
$f$ is congruent to a conjugate of~$f$ modulo a prime ideal 
of residue characteristic~$p$. The index mentioned above 
is the same as the index of the quotient of the Hecke algebra 
by the annihilator ideal of~$f$ in its normalization.
\end{abstract}

\noindent Keywords: modular forms, congruences between modular forms, index of Hecke algebra,
ramification.

\noindent Mathematics Subject Classification codes:  11F33,  11F11.

\section{Introduction and results}


Let $N$ be a positive integer, and as usual, let $\Gamma(N)$ denote the principal
congruence subgroup of level~$N$, i.e., the subgroup of~${\rm SL}_2(\Z)$
consisting of matrices that reduce to the identity matrix modulo~$N$.
Let $\Gamma$ be a subgroup of~${\rm SL}_2(\Z)$ 
that contains $\Gamma(N)$; so it is a congruence subgroup. 
Let $f$ 
be a cusp form of weight~$k$ on~$\Gamma$ that is an eigenform (i.e., an eigenvector
for the Hecke operators) and is normalized (i.e. whose first Fourier coefficient is~$1$).
Let $\OO$ denote the ring of integers of the number field~$K$
generated over~$\Q$ by the Fourier coefficients of~$f$. Let $R$ denote the order inside~$\OO$ 
generated by the Fourier coefficients of~$f$. 
In general, $R$ need not be all of~$\OO$, and so a natural question is: what
can one say about the index $[\Of:R]$? It is known to experts that 
the primes that divide this index are related to primes of congruence between~$f$
and its conjugates;
however, this does not seem to be documented in the
literature. The purpose of this article
is to state and prove the precise relations. 

The index alluded to above can be viewed in a different way, as follows.
Let $S_k(\Gamma)$ denote the space of cusp forms 
of weight~$k$ on~$\Gamma$. 
Let $\T$ denote the Hecke algebra generated by the Hecke operators 
acting on~$S_k(\Gamma)$. Then we have a map $\T \ra K$ obtained by taking~$T_n$ 
to the $n$-th Fourier coefficient~$a_n(f)$ of~$f$. 
The image of this map is precisely~$R$, and the kernel is~$I_f = {\rm Ann}_{\T}(f)$.
Thus we have an isomorphism $\T/I_f \isom R$. 
Now $\OO$ is the integral closure of~$R$ in~$K$, so 
the index $[\Of:R]$ is just the index of~$\T/I_f$ in its normalization.

Let $J_\pp$ denote the set of elements 
$\sigma \in {\rm Aut}(K/\Q)$ such that 
$\sigma(\pp) \subseteq \pp$
and $\sigma$ acts trivially on~$\OO/\pp$; it is a subgroup of~${\rm Aut}(K/\Q)$,
and if $K/\Q$ is Galois, then it is the 
usual inertia group, 
denoted~$I_\pp$\footnote{In the literature, the inertia group seems to be
defined only if the number field is Galois, and its relation to ramification
is discussed only in the Galois case. In the non-Galois case, the relation
may not be exactly the same, so to avoid any potential confusion,
we are not calling $J_\pp$ the inertia group.}. 
In the following, $p$ will always denote an arbitrary prime number.

\begin{prop} \label{prop1}
If $f$ is congruent to a conjugate of~$f$ by a nontrivial 
element of~${\rm Aut}(K/\Q)$
modulo a prime ideal~$\pp$ in~$\Of$ with residue characteristic~$p$, 
then either $J_\pp$ is nontrivial
or $p$~divides $[\Of:R]$.
If for 
some prime ideal~$\pp$ of~$\OO$, $J_\pp$ has a nontrivial element~$\sigma$, 
then 
$f$ is congruent modulo~$\pp$ to the conjugate of~$f$ by~$\sigma$.
\end{prop}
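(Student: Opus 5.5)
Both parts of Proposition~\ref{prop1} reduce to one observation: $R$ is generated as a ring by the $a_n(f)$, and for $\sigma\in{\rm Aut}(K/\Q)$ the conjugate form $f^\sigma$ has coefficients $\sigma(a_n(f))$. So ``$f\equiv f^\sigma \pmod{\pp}$'' means $\sigma(a_n)-a_n\in\pp$ for all~$n$. Since $\sigma$ is a ring homomorphism, this is equivalent to $\sigma(r)-r\in\pp$ for every $r\in R$.

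\textbf{Second statement.} This is immediate. Take $\sigma\in J_\pp$ nontrivial. By definition $\sigma(\pp)\subseteq\pp$ and $\sigma$ acts trivially on $\OO/\pp$, i.e.\ $\sigma(x)-x\in\pp$ for all $x\in\OO$. In particular this holds for $x=a_n(f)\in R\subseteq\OO$. Hence $f\equiv f^\sigma\pmod{\pp}$.

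\textbf{First statement.} Suppose $\sigma\ne 1$ and $\sigma(r)\equiv r\pmod{\pp}$ for all $r\in R$, and suppose $p\nmid[\OO:R]$. I will show $\sigma\in J_\pp$, so $J_\pp$ is nontrivial.

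\emph{Step 1: $\sigma$ acts trivially modulo $\pp$ on all of $\OO$.} Let $m=[\OO:R]$, which is prime to~$p$. For $x\in\OO$ we have $mx\in R$, so
\[
m(\sigma(x)-x)=\sigma(mx)-mx\in\pp .
\]
Since $m\notin\pp$ (as $\pp\cap\Z=p\Z$) and $\pp$ is prime, we get $\sigma(x)-x\in\pp$.

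\emph{Step 2: $\sigma(\pp)\subseteq\pp$.} For $x\in\pp$, Step~1 gives $\sigma(x)\in x+\pp=\pp$.

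So $\sigma$ satisfies both defining conditions of $J_\pp$, and since $\sigma$ is nontrivial, $J_\pp$ is nontrivial.

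\textbf{Main point.} The step needing care is the passage from congruence on the generators $a_n$ to congruence on all of $\OO$. It uses only that multiplication by the index $m$ maps $\OO$ into $R$ and that $m$ is a unit modulo~$\pp$; this is exactly where the hypothesis $p\nmid[\OO:R]$ enters. No Galois hypothesis on $K$ is needed, which is why the statement is phrased with $J_\pp$ rather than an inertia group.
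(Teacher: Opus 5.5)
Your proposal is correct and follows the paper's proof essentially step for step. You extend the congruence from the generators $a_n(f)$ to all of $R$, multiply by $m=[\OO:R]$ to move any $y\in\OO$ into $R$, and cancel $m$ because it is prime to $p$ (hence a unit modulo $\pp$), which places $\sigma$ in $J_\pp$; the second statement is the same one-line check from the definition of $J_\pp$.
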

\begin{proof} The second statement of the proposition above holds since by hypothesis,
for all~$n$, $\sigma(a_n(f)) = a_n(f)$ in~$\OO/\pp$, i.e., 
$\sigma(a_n(f)) - a_n(f) \in \pp$, and so
$f \equiv \sigma(f)\bmod \pp$, as needed. 

We now prove the first statement.
Let $\sigma$ denote the 
element of~${\rm Aut}(K/\Q)$ in the hypothesis.
Since the Fourier coefficients of~$f$ generate~$R$,
and for all~$n$, $a_n(f) \equiv \sigma(a_n(f)) \bmod \pp$, 
\begin{eqnarray} \label{eqn0}
{\rm for\ all}\ x \in R, x - \sigma(x) \in \pp\ .
\end{eqnarray}
If $p$~divides $[\Of:R]$, then we are done, so suppose it doesn't.
Let $m = [\Of:R]$ and let $y \in \OO$. Then $my \in R$, so $m(y - \sigma(y)) =
(my) - \sigma(my) \in \pp$ by~(\ref{eqn0}).
So the image of~$m(y - \sigma(y))$
in $\OO/\pp$ is~$0$. 
Since $p$ is coprime to~$m$, this means that 
the image of~$y - \sigma(y)$ in~$\OO/\pp$ is~$0$. Thus
\begin{eqnarray} \label{eqn1}
{\rm for\ all}\ y \in \OO, y - \sigma(y) \in \pp. 
\end{eqnarray}
In
particular, if $y \in \pp$, then $\sigma(y) \in \pp$, so $\sigma(\pp) \subseteq \pp$.
Also, by~(\ref{eqn1}), $\sigma$ acts trivially on~$\OO/\pp$. 
Thus $\sigma \in J_\pp$, and so $J_\pp$ is nontrivial. 
\end{proof}

Note that the converse of the first statement in the proposition above need not be true
since ${\rm Aut}(K/\Q)$ could be trivial, while $[\Of:R]$ is nontrivial
(in particular, divisible by some prime~$p$).
To get some kind of a converse,
we pass to an extension of~$K$ to get enough automorphims.
Let $\tK$ be a Galois closure of~$K$ and $\tO$ the ring of integers of~$\tK$.

\begin{thm} \label{thm:main}
If 
$p$ ramifies in~$\tO$ or $p$~divides $[\tO:R]$, then
$f$ is congruent to a conjugate of~$f$ by a nontrivial 
element of~${\rm Gal}(\tK/\Q)$
modulo a prime ideal in~$\tO$ that has residue characteristic~$p$. 
\end{thm}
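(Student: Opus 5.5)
The plan is to treat the two hypotheses separately. The ramified case follows from Galois theory, in the same way as the second statement of Proposition~\ref{prop1}. The index case is a counting argument with ring homomorphisms $R \ra \Fpbar$. Throughout, for $g \in {\rm Gal}(\tK/\Q)$ the conjugate $g(f)$ has Fourier coefficients $g(a_n(f))$. So ``$f \equiv g(f) \bmod P$'' means $x - g(x) \in P$ for all $x \in R$, since the $a_n(f)$ generate~$R$.

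\emph{Case 1: $p$ ramifies in~$\tO$.} Let $P$ be a prime of~$\tO$ above~$p$. Since $\tK/\Q$ is Galois and $e(P/p)>1$, the inertia group $I_P \subseteq {\rm Gal}(\tK/\Q)$ has order $e(P/p)$, so it contains some $\sigma \neq 1$. By definition $\sigma$ acts trivially on $\tO/P$. Hence $\sigma(a_n(f)) - a_n(f) \in P$ for all~$n$, i.e.\ $f \equiv \sigma(f) \bmod P$, as in the proof of Proposition~\ref{prop1}.

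\emph{Case 2: $p$ divides the index.} By Case~1 we may assume $p$ is unramified in~$\tO$, hence in~$\OO$. I will read the hypothesis as saying that $p \mid [\OO:R]$. The index of $R$ in $\tO$ only differs from this by contributions that already force ramification of $p$ in $\tO$, and those are covered by Case~1. Pinning down exactly what ``$p \mid [\tO:R]$'' means when $K \neq \tK$ is the one point I expect to need care; the argument proper works with $[\OO:R]$.

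Let $d = [K:\Q]$, and let $R_p = R \tensor \Z_p \subseteq \OO_p = \OO \tensor \Z_p$, which are free $\Z_p$-modules of rank~$d$. Since $p$ is unramified in~$K$, $\OO_p$ is étale over~$\Z_p$. The key fact is that if $R/pR$ is étale over~$\Fp$, then $R_p$ is étale over~$\Z_p$, hence normal, hence $R_p = \OO_p$ and $p \nmid [\OO:R]$. A finite $\Fp$-algebra of dimension~$d$ is étale if and only if it has exactly $d$ distinct ring homomorphisms to~$\Fpbar$. So $p \mid [\OO:R]$ forces $|{\rm Hom}(R,\Fpbar)| < d$.

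Now fix a prime $P$ of~$\tO$ above~$p$ and an embedding $\tO/P \hookrightarrow \Fpbar$. Every homomorphism $\tO \ra \Fpbar$ has the form ${\rm red}_P \circ g$ with $g \in {\rm Gal}(\tK/\Q)$, because the primes above $p$ are Galois conjugate and the decomposition group surjects onto ${\rm Gal}((\tO/P)/\Fp)$. Since $\tO$ is integral over~$R$, every homomorphism $R \ra \Fpbar$ extends to~$\tO$ (lying over plus extension of residue fields). Hence every homomorphism $R \ra \Fpbar$ is ${\rm red}_P \circ g|_R$.

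Choose coset representatives $g_1,\dots,g_d$ of ${\rm Gal}(\tK/\Q)/{\rm Gal}(\tK/K)$; these give the $d$ distinct embeddings $g_i|_K$. Since there are fewer than $d$ distinct homomorphisms $R\ra\Fpbar$, there exist $i \neq j$ with ${\rm red}_P \circ g_i = {\rm red}_P \circ g_j$ on~$R$. That is, $g_i(x) - g_j(x) \in P$ for all $x\in R$. Applying $g_i^{-1}$ gives $x - \sigma(x) \in P' := g_i^{-1}(P)$ with $\sigma = g_i^{-1}g_j$. Here $\sigma$ is nontrivial because $g_i,g_j$ lie in different cosets, so $\sigma|_K \neq {\rm id}$. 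Thus $f \equiv \sigma(f) \bmod P'$, and $P'$ has residue characteristic~$p$.

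The main obstacle is the counting step: the equivalence between ``$R/pR$ has $d$ geometric points'' and ``$R_p = \OO_p$''. I would prove it by noting that if $R/pR$ is étale then the discriminant of $R$ is prime to~$p$. Since ${\rm disc}(R) = [\OO:R]^2\,{\rm disc}(\OO)$, it follows that $p \nmid [\OO:R]$.
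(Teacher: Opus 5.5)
Your argument is correct. Case~1 is the paper's own argument, but Case~2 takes a genuinely different route.

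\emph{The paper's route.} It works on the large side. With $A=\tO/p\tO$ and $B$ the image of $R/pR$, it uses the index to show $B\neq A$. It then counts $\Fpbar$-points via ``number of points $=$ dimension'' for semisimple algebras, which forces it to prove that $B$ is semisimple. Next it lists the points $\tau_{i,n}\circ\pi_i$ of $A$ explicitly and finds two distinct ones agreeing on~$R$. Finally it builds the Galois element by hand: it takes $\sigma$ with $\sigma(\pp_j)=\pp_k$ and corrects it by some $\tau$ in the decomposition group, getting $\tau^{-1}\sigma^{-1}$.

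\emph{Your route.} You work on the small side. Non-\'etaleness of $R/pR$, i.e.\ $p\mid{\rm disc}(R)=[\OO:R]^2{\rm disc}(\OO)$, gives fewer than $d$ points. Pigeonholing the $d$ points ${\rm red}_P\circ g_i$, indexed by cosets of ${\rm Gal}(\tK/K)$, then produces $g_i^{-1}g_j$ directly. Your extension step, showing every point of $R$ has the form ${\rm red}_P\circ g$, is not actually needed for the pigeonhole.

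\emph{What each buys.} Your argument is shorter and proves more.
\begin{itemize}
\item You get $\sigma|_K\neq{\rm id}$, so $\sigma(f)\neq f$. The paper's $\tau^{-1}\sigma^{-1}$ is nontrivial in ${\rm Gal}(\tK/\Q)$, but it may lie in ${\rm Gal}(\tK/K)$. The paper leaves even this nontriviality implicit; it follows from the two embeddings being distinct.
\item The discriminant criterion never uses that $p$ is unramified. So it also covers $p$ ramified in $\OO$, with the same stronger conclusion.
\end{itemize}
In exchange, the paper's route needs only linear algebra over $\Fp$ and the structure of finite semisimple algebras, rather than the \'etale/trace-form criterion.

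\emph{A correction.} Replacing $[\tO:R]$ by $[\OO:R]$ is fine, but your stated reason is wrong. If $K\neq\tK$, then $R$ has $\Z$-rank $d<[\tK:\Q]$. So $[\tO:R]$ is not a finite index, and it does not differ from $[\OO:R]$ by ``contributions forcing ramification.'' The correct justification is a dichotomy:
\begin{itemize}
\item If $K=\tK$, the two indices coincide.
\item If $K\neq\tK$, the theorem is trivial, since any nontrivial element of ${\rm Gal}(\tK/K)$ fixes $f$. The paper makes this remark after Corollary~\ref{cor:main}.
\end{itemize}
Alternatively, use the reading the paper actually adopts in the proof of Lemma~\ref{lem2}: there is $x\in\tO\setminus R$ with $px\in R$. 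Such an $x$ lies in $K\cap\tO=\OO$, so the hypothesis is literally $p\mid[\OO:R]$.

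The paper shares this blind spot. Its Lemma~\ref{lem2} asserts that $R$ and $\tO$ have the same rank, which holds only when $K=\tK$. In the non-Galois case its conclusion there is true simply by a dimension count.
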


We will prove this theorem in the next section.

\begin{cor} \label{cor:main}
If 
$p$ ramifies in~$\OO$ or $p$~divides $[\Of:R]$, then
$f$~is congruent to a conjugate of~$f$ by a nontrivial element of~${\rm Gal}(\tK/\Q)$
modulo a prime ideal in~$\tO$  that has residue characteristic~$p$. 
\end{cor}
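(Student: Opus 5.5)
The corollary will follow from Theorem~\ref{thm:main} once we check that each of its two hypotheses implies the corresponding hypothesis of the theorem. The conclusions of the corollary and of the theorem are literally the same: a congruence modulo a prime of~$\tO$ of residue characteristic~$p$, by a nontrivial element of~${\rm Gal}(\tK/\Q)$. So it suffices to show that if $p$ ramifies in~$\OO$ then $p$ ramifies in~$\tO$, and that if $p$ divides $[\Of:R]$ then $p$ divides $[\tO:R]$.

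\emph{The ramification case.} Suppose $p$ ramifies in~$\OO$. Then there is a prime $\pp$ of~$\OO$ above~$p$ with ramification index $e(\pp/p)>1$, or with $\OO/\pp$ inseparable over~$\Fp$; the latter cannot happen for finite fields. Take any prime $\mathfrak{P}$ of~$\tO$ lying above~$\pp$. Ramification indices are multiplicative in towers, so
\[
e(\mathfrak{P}/p)=e(\mathfrak{P}/\pp)\,e(\pp/p)>1 ,
\]
and therefore $p$ ramifies in~$\tO$. Equivalently, one can use the discriminant: $p$ divides ${\rm disc}(\OO)$, and ${\rm disc}(\OO)$ divides a power of ${\rm disc}(\tO)$ via the transitivity formula for discriminants.

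\emph{The index case.} We have the inclusions $R\subseteq\OO\subseteq\tO$. Here $R$ and $\OO$ are free $\Z$-modules of rank $[K:\Q]$, while $\tO$ has rank $[\tK:\Q]$, which is larger in general. So $[\tO:R]$ has to be interpreted as the paper presumably intends in the proof of Theorem~\ref{thm:main}: as the index of~$R$ in~$\tO\cap K$, or as the condition that $p$ ``divides'' the quotient in the sense that $\tO/R$ has nontrivial $p$-torsion.

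The cleanest reading, which I would adopt, is that $p\mid[\tO:R]$ means the finite abelian group $(\tO\cap K)/R$ has order divisible by~$p$, or more robustly that $\tO/R$ has an element of order~$p$. Under this reading the argument is short. If $p\mid[\Of:R]$, then $\OO/R$ is a finite group of order divisible by~$p$, so it has an element of order~$p$. Since $\tO\cap K=\OO$, the group $\OO/R$ injects into $\tO/R$, and the same element still has order~$p$ there. Hence the hypothesis of Theorem~\ref{thm:main} holds in either formulation. Applying Theorem~\ref{thm:main} then gives the desired congruence.

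The only real obstacle is this bookkeeping about what $[\tO:R]$ means when the ranks differ. I would align it with whatever convention the proof of Theorem~\ref{thm:main} actually uses, most likely the $p$-torsion of $\tO/R$, or a local statement that $R\otimes\Z_p\neq(\tO\cap K)\otimes\Z_p$. In every version, the key fact is $\OO=\tO\cap K$, which makes the divisibility transfer immediate.
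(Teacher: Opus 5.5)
Your proposal is correct and is the paper's argument: the paper deduces the corollary from Theorem~\ref{thm:main} in one line, noting that ramification in $\OO$ forces ramification in $\tO$ and that $p \mid [\OO:R]$ implies $p \mid [\tO:R]$. Your worry about the meaning of $[\tO:R]$ when $\tK\neq K$ is justified, since the paper does not address it, and your torsion reading works; it can also be avoided entirely: if $K\neq\tK$ the conclusion holds automatically because any nontrivial $\sigma\in{\rm Gal}(\tK/K)$ fixes $f$, and if $K=\tK$ the corollary is the theorem verbatim.
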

\begin{proof}
This follows from the theorem above
since if $p$ ramifies in~$\OO$, then it ramifies in~$\tO$, and
if $p$~divides $[\OO:R]$, then it also divides $[\tO:R]$. 
\end{proof}

The converse of the corollary above is not true if $K/\Q$~is not Galois, since
any nontrivial element in~${\rm Gal}(\tK/K)$
fixes~$f$, and there are primes~$p$ that do not ramify
in~$\OO$ and do not divide $[\Of:R]$ (since there are only finitely
many primes that ramify in~$\OO$ or divide $[\Of:R]$). 
Things are nicer if $K/\Q$~is Galois,
since then from the proposition and theorem above, we get 
an ``if and only if'' statement:

\begin{cor}
Suppose $K/\Q$ is Galois. Then 
$p$~ramifies in~$\OO$ or $p$~divides $[\Of:R]$ if and only if
$f$~is congruent to a conjugate of~$f$ by a nontrivial 
element of~${\rm Gal}(K/\Q)$
modulo a prime ideal in~$\OO$ that has residue characteristic~$p$. 
\end{cor}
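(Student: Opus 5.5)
Since $K/\Q$ is Galois, we may take the Galois closure to be $\tK = K$ itself, so that $\tO = \OO$ and ${\rm Gal}(\tK/\Q) = {\rm Gal}(K/\Q) = {\rm Aut}(K/\Q)$. The plan is to prove the two implications separately. The forward implication will come from Theorem~\ref{thm:main}, and the reverse implication from the first statement of Proposition~\ref{prop1}. The one extra ingredient is the standard fact about Galois extensions that the inertia group $I_\pp$ has order equal to the ramification index $e(\pp/p)$.

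For the forward direction, suppose $p$ ramifies in~$\OO$ or $p$ divides $[\Of:R]$. With $\tO = \OO$, this says exactly that $p$ ramifies in~$\tO$ or $p$ divides $[\tO:R]$. Theorem~\ref{thm:main} then gives a nontrivial $\sigma \in {\rm Gal}(\tK/\Q) = {\rm Gal}(K/\Q)$ and a prime ideal of $\tO = \OO$ of residue characteristic~$p$ modulo which $f \equiv \sigma(f)$. This is precisely the right-hand side of the corollary.

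In the ramified case one can also argue directly, without the theorem. Pick a prime $\pp$ of~$\OO$ above~$p$ with $e(\pp/p) > 1$. Then $I_\pp$ is nontrivial, and $I_\pp = J_\pp$ because $K/\Q$ is Galois. The second statement of Proposition~\ref{prop1} then gives the congruence modulo~$\pp$.

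For the reverse direction, suppose $f \equiv \sigma(f) \bmod \pp$ for some nontrivial $\sigma \in {\rm Gal}(K/\Q)$ and some prime $\pp$ of~$\OO$ of residue characteristic~$p$. Since ${\rm Gal}(K/\Q) = {\rm Aut}(K/\Q)$, the first statement of Proposition~\ref{prop1} applies. It says that either $p$ divides $[\Of:R]$, in which case we are done, or $J_\pp$ is nontrivial. In the latter case, because $K/\Q$ is Galois, $J_\pp$ is the usual inertia group $I_\pp$. Its order is $e(\pp/p)$, so $e(\pp/p) > 1$ and $p$ ramifies in~$\OO$.

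There is no real obstacle, since both implications are immediate from the earlier results. The only point needing care is the identification of nontrivial $J_\pp$ with ramification. This step uses the Galois hypothesis, through $J_\pp = I_\pp$ and $|I_\pp| = e(\pp/p)$, and it is exactly where the statement fails without that hypothesis, as remarked after Corollary~\ref{cor:main}.
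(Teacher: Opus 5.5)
Your proof is correct and follows the paper's route: the paper obtains this corollary directly from Theorem~\ref{thm:main} with $\tK = K$ for the forward direction, and from Proposition~\ref{prop1} for the reverse direction. Your use of $J_\pp = I_\pp$ and $|I_\pp| = e(\pp/p)$ spells out a step the paper leaves implicit, namely that a nontrivial $J_\pp$ forces $p$ to ramify in $\OO$.
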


In the rest of this section, we mention a couple of situations where
the index~$[\Of:R]$ is relevant. 
There are some other indices  considered in the literature that are 
analogous to the one in this article. 
Let $\Gamma = {\rm SL}_2(\Z)$. As explained in~\cite{jochnowitz:duke}, 
$\T \tensor \Q$ is a product of number fields and 
has a unique maximal order~$\OO_k$, which  is isomorphic to the
product of the rings of integers of its component number fields. 
In loc. cit., the author studies the index~$[\OO_k:\T]$ (in loc. cit., $\T$ is denoted~$\T_k$)
as the weight~$k$ varies. Now by projection, we get a map $\OO_k \ra \OO$, which is
a surjection. Moreover, the image of~$\T$ under this surjection 
is contained in~$R$. Thus we have a surjection $\OO_k/\T \ra \OO/R$,
and so our index $[\OO:R]$ divides the index~$[\OO_k:\T]$ considered in loc. cit.;
this latter index is just the index of~$\T$ in its normalization, since the 
normalization of~$\T$ is~$\OO_k$.
Similarly, for $\Gamma = \Gamma_1(N)$, in~\cite{calegari-emerton}, 
the authors consider the index of the image of~$\T$ acting on newforms
in its normalization; for reasons as above, our index divides this index as well.

Our index is also related to the discriminant of
the Hecke algebra: recall that if 
$M$ is a free $\Z$-module of rank~$n$ with basis
$b_1, \ldots, b_n$, 
then the discriminant of~$M$, denoted~${\rm disc}(M)$, is the determinant of the 
$n \times n$ matrix 
with $(i,j)$-th entry ${\rm tr}(b_i b_j)$, where ${\rm tr}(b)$,
for $b \in M$, denotes
the trace of left multiplication by~$b$.
For simplicity, we only work with $\Gamma = \Gamma_0(N)$
and $\T^{\rm new}$ (instead of~$\T$), where $\T^{\rm new}$
the quotient of~$\T$ that acts faithfully on the 
subspace~$S_k(\Gamma_0(N)^{\rm new}$ spanned by newforms. 
In what follows, we have adapted the discussion in~\cite[\S6]{eki:congr}. 
Let~$f_i$,  for $i$ in an appropriate indexing set~$I$, be a set of representatives
for the Galois orbits of newforms. For each~$i \in I$, let $\T_{f_i}$ denote the 
quotient of~$\T^{\rm new}$ that acts faithfully on the subspace spanned by~$f_i$
and its Galois conjugates. 
Then ${\T^{\rm new}}$ injects into
$\oplus_{i \in I} \T_{f_i}$. 
Also, for each~$i \in I$, letting $K_i$ denote 
the number field generated by the Fourier coefficients of~$f_i$, 
the map $\T \ra K_i$ that takes $T_n$ to the $n$-th Fourier coefficient of~$f_i$
lands in~$\OO_i$, the ring of integers of~$K_i$, has image an order~$R_i$ in~$K_i$,
and has kernel~$I_{f_i}$, giving an isomorhpism $\T_{f_i} = \T/I_{f_i} \isom R_i$.
Denoting the discriminant by~$\rm disc$, from the properties of the discriminant, it follows
that 
\begin{eqnarray}\label{eqndiff}
{\rm disc}(\T^{\rm new}) &  = & \bigg| \frac{\oplus_{i \in I} \T_{f_i}}{\T^{\rm new}} \bigg|^2 \cdot
\prod_{i \in I} {\rm disc}(\T_{f_i}) \nonumber\\
& = & \bigg| \frac{\oplus_{i \in I} \T_{f_i}}{\T^{\rm new}} \bigg|^2 \cdot
\prod_{i \in I} \Big( [\OO_i: R_i]^2 \cdot
{\rm disc} (\OO_i) \Big)
\end{eqnarray}
Now a prime~$p$ divides the term $| (\oplus_{i \in I} \T_{f_i})/\T^{\rm new}  |$ 
in~(\ref{eqndiff})
if and only 
some $f_i$ is congruent to a normalized eigenform~$g$ 
in~$S_k(\Gamma_0(N)^{\rm new}$ that is not conjugate to~$f_i$,
modulo a prime ideal lying over~$p$ in the ring of integers of 
the number field generated by the Fourier coefficients of~$f_i$ and~$g$
(e.g., see~\cite[p.193--194, 196]{ribet:modp}). 
From Corollary~\ref{cor:main}, we see that if a prime~$p$ divides
the remaining term~$\prod_{i \in I} ( [\OO_i: R_i]^2 \cdot
{\rm disc} (\OO_i) )$ in~(\ref{eqndiff}), then some 
$f_i$~is congruent to a conjugate of itself by a nontrivial element of
the Galois group of a Galois closure~$\tK_i$ of~$K_i$, 
modulo a prime ideal in the ring of integers of~$\tK_i$
that has residue characteristic $p$; from what we have seen above,
unfortunately we do not have an ``if and only if'', unlike in the previous sentence. 
We remark, in this context, that a prime~$p$ divides ${\rm disc}(\T^{\rm new})$
if and only if there is a 
congruence in characteristic~$p$ between two normalized eigenforms 
in~$S_k(\Gamma_0(N)^{\rm new}$ (see, e.g.,~\cite[Prop~2]{calegari-stein}); 
the decomposition~(\ref{eqndiff}) above shows the different roles 
that congruences between conjugate forms and 
congruences between
non-conjugate forms play in the discriminant.



The rest of this article is devoted to the proof of~Theorem~\ref{thm:main}.\\

\noindent {\em Acknowledgement}: The structure of the proof 
of Theorem~\ref{thm:main} was indicated to us by
K.~Ribet, for which we are very grateful. 
We would also like to thank H.~Hida for answering some questions
related to the topic of this article.

\section{Proof of Theorem~\ref{thm:main}}

Suppose $p$ ramifies in~$\tO$. Then for 
any prime ideal~$\pp$ of~$\tO$ lying over~$p$, the inertia group at~$\pp$ is nontrivial; 
let $\sigma$ be any nontrivial element of it. 
Then for all~$n$, $\sigma(a_n(f)) = a_n(f)$ in~$\tO/\pp$, i.e., 
$\sigma(a_n(f)) - a_n(f) \in \pp$, and so $f \equiv \sigma(f)\bmod \pp$, 
and we are done.
So suppose that $p$ does not ramify in~$\tO$. Assume that
$p$ divides $[\tO:R]$. It remains to show that then 
$f$ is congruent to a conjugate of~$f$ by a nontrivial element of~${\rm Gal}(\tK/\Q)$
modulo a prime ideal in $\tO$ that has residue characteristic $p$. 

Let $A$ be a finite semisimple commutative $\Fp$-algebra. 
We associate to it the set 
$S(A)$ that consists of the $\Fp$-algebra homomorphisms from~$A$ to~$\Fpbar$. 

\begin{lem}\label{lem1} 
The cardinality of $S(A)$ is the $\Fp$-dimension of A.
\end{lem}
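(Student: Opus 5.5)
Since $A$ is a finite semisimple commutative $\Fp$-algebra, I will first invoke the structure theorem for such algebras (Artin--Wedderburn, or directly: a finite reduced commutative ring is a finite product of fields). This gives an isomorphism $A \isom F_1 \times \cdots \times F_r$, where each $F_i$ is a finite field extension of~$\Fp$, say of degree $d_i$. Hence $\dim_{\Fp} A = \sum_i d_i$. Concretely, $A$ is Artinian with nilradical zero, so its finitely many maximal ideals $\mathfrak{m}_1,\ldots,\mathfrak{m}_r$ have trivial intersection, and the Chinese remainder theorem gives $A \isom \prod_i A/\mathfrak{m}_i$.

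Next I will count $S(A)$ via this decomposition. An $\Fp$-algebra homomorphism $\phi: A \ra \Fpbar$ (unital) has image a field, so its kernel is a maximal ideal. Thus $\phi$ factors through exactly one projection $A \ra F_i$: the idempotents $e_j$ satisfy $\phi(e_j)\in\{0,1\}$ and $\sum_j \phi(e_j) = 1$, and $e_ie_j=0$ for $i\neq j$, so exactly one $e_i$ maps to~$1$. Conversely, every embedding $F_i \hookrightarrow \Fpbar$ composed with the projection gives an element of~$S(A)$. Therefore $S(A)$ is in bijection with $\bigsqcup_i {\rm Hom}_{\Fp}(F_i, \Fpbar)$.

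Finally, the number of $\Fp$-embeddings of $F_i=\F_{p^{d_i}}$ into $\Fpbar$ is exactly $d_i$. The field $F_i$ is generated by a root $\alpha$ of an irreducible polynomial of degree $d_i$ over~$\Fp$, which is separable because finite fields are perfect. An embedding is determined by where $\alpha$ goes, and $\alpha$ can go to any of the $d_i$ distinct roots of that polynomial in $\Fpbar$. Equivalently, the embeddings are the $d_i$ Frobenius twists of one fixed embedding. Summing gives $|S(A)| = \sum_i d_i = \dim_{\Fp} A$.

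The only step needing real care is the product decomposition, where I need semisimplicity to rule out nilpotents; the rest is routine. If "semisimple" is intended in the sense of étale, $A \tensor_{\Fp} \Fpbar \isom \Fpbar^{\dim A}$, the count can be read off directly, as $S(A) = {\rm Hom}_{\Fpbar}(A\tensor\Fpbar,\Fpbar)$.
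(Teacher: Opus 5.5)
Your proof is correct and follows essentially the same route as the paper: write $A$ as a product of finite fields $\F_{p^{d_i}}$, use the orthogonal idempotents to show each $\phi \in S(A)$ factors through exactly one factor, and count $d_i$ embeddings of each factor into $\Fpbar$. Your total $|S(A)| = \sum_i d_i$ is the correct one; the paper's text says ``product'' and writes $f_1 \cdots f_r$, which is a slip for $f_1 + \cdots + f_r$, as its later use of $\dim_{\Fp} \tO/p\tO = rf$ confirms.
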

\begin{proof}
A simple
commutative algebra has no nontrivial ideals, and is hence a field.
Thus $A$ is a product of fields that are finite and contain $\Fp$, i.e.,
finite extensions of $\Fp$.
So $A \isom \F_{p^{f_1}} \times \cdots \times  \F_{p^{f_r}}$ 
for some $f_1, \ldots, f_r$. 
Let $\{e_i\}_{i=1}^r$ denote the standard basis for this product and
let $\phi \in S(A)$. Then $0 = \phi(0) = \phi(e_i e_j) = \phi(e_i)
\phi(e_j)$. So for each $i \neq j$, if $\phi(e_i) \neq 0$, then $\phi(e_j) = 0$. Thus
only one $\phi(e_i)$ is non-zero. So the cardinality of $S(A)$ at most the product
of the cardinalities of the various $S(\F_{p^{f_i}})$, and clearly it is at least
as much, and so the two are equal.
The number of $\Fp$-algebra 
homomorphisms of $\F_{p^{f_i}}$ into $\Fpbar$ is the order of 
the Frobenius automorphism, i.e., ${f_i}$. So the cardinality of $S(A)$
is $f_1 \cdots f_r = \dim A$.
\end{proof}

Let $B$ be a subalgebra of~$A$. The following lemma is probably well known,
but we could not find a suitable reference.

\begin{lem} 
$B$ is semisimple. 
\end{lem}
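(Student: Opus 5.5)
Since $A$ is a finite commutative $\Fp$-algebra, semisimplicity is equivalent to having zero nilradical, i.e., being reduced. This follows from the structure theorem for Artinian rings: a finite commutative algebra is a finite product of local Artinian rings, and each such ring is a field exactly when its maximal ideal, which consists of nilpotents, is zero. So the plan is to show that $B$ has no nonzero nilpotent elements and then deduce that $B$ is semisimple.

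First, $A$ is reduced. By the proof of Lemma~\ref{lem1}, $A \isom \F_{p^{f_1}} \times \cdots \times \F_{p^{f_r}}$, and a product of fields has no nonzero nilpotents: if $x^n = 0$, then each component satisfies $x_i^n = 0$ in a field, so $x_i = 0$. Any nilpotent element of the subalgebra $B$ is nilpotent in $A$, hence zero, so $B$ is reduced.

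Next, $B$ is finite-dimensional over $\Fp$ because it is a subspace of $A$, and it is commutative. Hence $B$ is Artinian, and so $B \isom \prod_j B_j$ with each $B_j$ local Artinian. In a local Artinian ring the maximal ideal is nilpotent. Since $B$ is reduced, each $B_j$ is reduced, so each maximal ideal is zero and each $B_j$ is a field. Therefore $B$ is a finite product of finite fields, i.e., semisimple in the sense used in Lemma~\ref{lem1}.

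An alternative that avoids the structure theorem is to show that every ideal $I$ of $B$ is generated by an idempotent, which gives a complement $(1-e)B$. Take $x \in I$. The descending chain $xB \supseteq x^2B \supseteq \cdots$ stabilizes, so $x^n = x^{2n}y$ for some $y$. Then $e = x^n y^{\,}$ (suitably adjusted) is an idempotent with $x^n B = eB$, and reducedness lets us recover $x \in eB$. Summing such idempotents over a generating set shows that $I$ is a direct summand.

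The main obstacle is only bookkeeping. The paper says ``semisimple'' without fixing a definition, so I would state explicitly that for finite commutative algebras I take it to mean a product of fields, equivalently reduced, and then cite the Artinian structure theorem. With that convention the argument is short.
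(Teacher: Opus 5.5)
Your main argument is correct and is essentially the paper's proof. A nilpotent of $B$ is a nilpotent of $A$, which has none, and an Artinian structure theorem turns ``no nonzero nilpotents'' into semisimplicity: the paper cites ``Artinian with no nonzero nilpotent ideals $\Leftrightarrow$ semisimple,'' and your decomposition into local Artinian factors with nilpotent maximal ideals is the commutative form of the same fact. Your optional idempotent sketch is not needed and is slightly loose as written ($e=x^ny$ is already idempotent, and idempotents should be combined as $e_1+e_2-e_1e_2$ rather than summed), but this does not affect the main proof.
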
 
\begin{proof}
First, note that all the algebras in question are Artinian since they are finite. 
Now an Artinian ring is semisimple if and only if it has no nilpotent ideals other
than the zero ideal (e.g., see~\cite[Theorem~5.3.5]{cohn:algebra} or ``Structure theorem
for semi-primitive Artinian rings'' in~\cite[\S4.4]{jacobson:basicII}).
Suppose $B$ is not semisimple. Then 
it has a nilpotent ideal~$I$ other than the zero ideal. Hence there is a nonzero 
element~$b \in I$
that is nilpotent. Then the ideal generated by~$b$ in~$A$ is nilpotent (considering
that $A$ is commutative) and is nonzero (since it contains~$b$). 
Then from the second sentence of this proof, $A$ is semisimple. 
This contradiction proves the lemma.
\end{proof}

\begin{lem} \label{lem3}
If $B$ is a proper subalgebra of~$A$ (i.e., $B \neq A$), 
then the natural restriction map $S(A) \ra S(B)$ that takes~$\phi$ 
to~$\phi|_B$  is not injective.
\end{lem}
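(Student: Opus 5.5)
The plan is a counting argument with Lemma~\ref{lem1}, applied to both $A$ and $B$. By the previous lemma, $B$ is a finite semisimple commutative $\Fp$-algebra, so Lemma~\ref{lem1} applies to it as well as to~$A$. This gives $|S(A)| = \dim_{\Fp} A$ and $|S(B)| = \dim_{\Fp} B$. Since $B$ is a proper $\Fp$-subspace of the finite-dimensional space~$A$, we have $\dim_{\Fp} B < \dim_{\Fp} A$, and hence $|S(B)| < |S(A)|$.

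Before using this, I will check that restriction really gives a well-defined map $S(A) \ra S(B)$. If $\phi: A \ra \Fpbar$ is an $\Fp$-algebra homomorphism, then $\phi|_B$ is also one, because $B$ is a subalgebra of~$A$ and contains the identity of~$A$. So $\phi|_B \in S(B)$. Both sets are finite, and by the count above the source is strictly larger than the target. The pigeonhole principle then shows the restriction map cannot be injective.

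The argument is entirely formal once Lemma~\ref{lem1} is available for~$B$. The only point needing care is the convention that subalgebras share the unit of~$A$, so that restrictions of unital homomorphisms stay unital. That convention is also what the previous lemma implicitly uses when it applies Lemma~\ref{lem1}'s hypotheses to~$B$. I do not expect a genuine obstacle beyond this.
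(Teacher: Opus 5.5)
Your proposal is correct and is essentially the paper's own proof: apply Lemma~\ref{lem1} to both $A$ and $B$ (the latter being semisimple by the preceding lemma), observe that $\dim_{\Fp} B < \dim_{\Fp} A$, and conclude non-injectivity by pigeonhole. Your check that restriction lands in $S(B)$, using that $B$ contains the unit of $A$, makes explicit a point the paper leaves implicit.
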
 
\begin{proof} 
Considered as $\F_p$-vector spaces, $B$ is a proper subspace of~$A$, and so has
smaller dimension.
Then by Lemma~\ref{lem1}, 
$S(A)$ is bigger than $S(B)$. The lemma follows.
\end{proof}

Since $p$ does not ramify in~$\tO$, we
have  $p \tO = \pp_1 \cdots \pp_r$, for distinct prime ideals $\pp_1, \ldots, \pp_r$
of~$\tO$. 
Then 
\begin{eqnarray} \label{eqn2}
\tO/p \tO = \tO/(\pp_1 \cdots\pp_r) \isom \tO/\pp_1 \times \cdots \times \tO/\pp_r\ ,
\end{eqnarray}
where the second isomorphism comes from the Chinese remainder theorem. 
For each $i= 1, \ldots, r$, let $\pi_i$ denote the natural projection
map from $\tO/\pp_1 \times \cdots \times \tO/\pp_r$ to~$\tO/\pp_i$. 
Let $f$ denote the degree of $\tO/\pp_i$ over $\F_p$ (it is the same
for all~$i$, since $\tK/\Q$ is Galois). 
Then for each $i= 1, \ldots, r$, 
there are $f$ embeddings of~$\tO/\pp_i$ into $\Fpbar$, call them
$\tau_{i,1}, \ldots, \tau_{i,f}$. 
Thus we get a list of embeddings
of~$\tO/p \tO$ into~$\Fpbar$ 
by projecting onto one of the $r$ factors in~(\ref{eqn2}) and then composing with
one of the~$f$ embeddings into~$\Fpbar$ (more precisely, the list consists of
$\tau_{i,n} \circ \pi_i$ for $i = 1, \ldots, r$, $n = 1, \ldots, f$); 
there are $rf$ members in this list.

\begin{lem} \label{lem4}
The embeddings of~$\tO/p \tO$ into~$\Fpbar$
in the list above are distinct and 
$S(\tO/p \tO)$ consists of the embeddings in this list.
\end{lem}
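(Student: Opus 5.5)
The plan is to prove distinctness directly and then use Lemma~\ref{lem1} to get equality by counting. First note that $\tO/p\tO$ is a finite semisimple commutative $\Fp$-algebra: by~(\ref{eqn2}) it is a product of the finite fields $\tO/\pp_i$. Its $\Fp$-dimension is $rf$, since $\tO$ is free of rank $[\tK:\Q]$ over $\Z$, so $\dim_{\Fp}\tO/p\tO = [\tK:\Q] = rf$ ($p$ is unramified and $\tK/\Q$ is Galois, so $e=1$). Alternatively, read the dimension straight off~(\ref{eqn2}) as $\sum_i [\tO/\pp_i:\Fp] = rf$. Lemma~\ref{lem1} then gives $|S(\tO/p\tO)| = rf$.

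Each member $\tau_{i,n}\circ\pi_i$ of the list is a composition of $\Fp$-algebra homomorphisms, so it lies in $S(\tO/p\tO)$. The list has $rf$ members. It therefore suffices to show these members are pairwise distinct; since $S(\tO/p\tO)$ has exactly $rf$ elements, it will then consist precisely of the list.

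For distinctness, let $e_1,\dots,e_r$ be the standard idempotents of $\tO/\pp_1\times\cdots\times\tO/\pp_r$. The map $\tau_{i,n}\circ\pi_i$ sends $e_i$ to $1$ and $e_j$ to $0$ for $j\neq i$. So if $\tau_{i,n}\circ\pi_i = \tau_{i',n'}\circ\pi_{i'}$, evaluating at $e_i$ forces $i=i'$. Then $\tau_{i,n}\circ\pi_i=\tau_{i,n'}\circ\pi_i$, and since $\pi_i$ is surjective, $\tau_{i,n}=\tau_{i,n'}$, so $n=n'$ because the $\tau_{i,n}$ were chosen as the $f$ distinct embeddings of $\tO/\pp_i$.

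There is no real obstacle. The only point needing care is the count $|S(\tO/p\tO)|=rf$, which amounts to recognizing that the isomorphism~(\ref{eqn2}) makes $\tO/p\tO$ semisimple so that Lemma~\ref{lem1} applies. One could avoid Lemma~\ref{lem1} by repeating its idempotent argument: any $\phi\in S$ is nonzero on exactly one $e_i$ and hence factors through $\pi_i$ followed by an embedding of $\tO/\pp_i$. The counting route is shorter, though.
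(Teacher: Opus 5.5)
Your proof is correct and follows essentially the same route as the paper: the count $|S(\tO/p\tO)| = rf$ from Lemma~\ref{lem1} and~(\ref{eqn2}) reduces the lemma to distinctness, and evaluating at the idempotent $e_i$ is the paper's choice $x_j=1$, $x_k=0$. Your last step (surjectivity of $\pi_i$ plus the fact that the $\tau_{i,n}$ are distinct by construction) is a cleaner replacement for the paper's Frobenius-power argument, which as written needs a generator of $\tO/\pp_j$ over $\Fp$ rather than an arbitrary element outside $\Fp$.
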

\begin{proof}
By Lemma~\ref{lem1}, the  size of $S(\tO/p \tO)$ is its 
$\Fp$-dimension, which by~(\ref{eqn2}) is $rf$. So the second 
claim in the lemma follows from the first, which we now prove.
Suppose that for some $j,k \in \{1, \ldots, r\}$ and $n, m
\in \{1, \ldots, f\}$,
the embedding obtained by projecting to the $j$-th factor 
followed by the $n$-th possible embedding in~$\Fpbar$ is
the same as 
the embedding obtained by projecting to the $k$-th factor 
followed by the $m$-th possible embedding in~$\Fpbar$; we
will show that then $j=k$ and $m=n$, thus proving lemma.

Let $x_1, \ldots, x_r \in \tO$.
From the assumption above, 
we have $\tau_{j,m} \pi_j (x_1 + \pp_1, \ldots, x_r + \pp_r)
= \tau_{k,n} \pi_k (x_1 + \pp_1, \ldots, x_r + \pp_r)$, i.e.,
\begin{eqnarray} \label{eqn4}
\tau_{j,m} (x_j + \pp_j)
= \tau_{k,n} (x_k + \pp_k)\ .
\end{eqnarray}
If $j \neq k$, then pick $x_j =1$ and $x_k =0$ to conclude from
the equation above that 
$1=0$; this contradiction shows that $j=k$. 
If $\tO/\pp_j = \F_p$, then there is just a single~$\tau_{j,m}$, so $m=n=1$. 
If $\tO/\pp_j \neq \F_p$ and $m \neq n$, then 
$\tau_{j,m}$ and $\tau_{j,n}$ differ by a nontrivial power of the Frobenius,
which would give different values for any $x_j + \pp_j \in (\tO/\pp_j) \backslash \F_p$
on the two sides of~(\ref{eqn4}) (note that $j=k$ now);
this contradiction shows that $m=n$, and we are done.
\end{proof}

\begin{lem}\label{lem2} 
The natural map $R/pR \ra \tO/p \tO$ is
not surjective.
\end{lem}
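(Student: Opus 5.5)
We argue by contradiction: assume the natural map $R/pR \ra \tO/p\tO$ is surjective, and deduce that $p$ does not divide $[\tO:R]$, contrary to the standing assumption. The relevant structure is that $R \subseteq \tO$ are free $\Z$-modules of finite rank. Note that $R$ has rank $[K:\Q]$ while $\tO$ has rank $[\tK:\Q]$, so these ranks differ unless $K=\tK$. Hence the index $[\tO:R]$ must be interpreted appropriately. When $K \neq \tK$ it is infinite, and the paper presumably means it in the sense that $p$ ``divides'' it whenever $\tO/R$ has nontrivial $p$-part, which is then automatic. I would handle the general case directly with Nakayama-type reasoning rather than relying on equality of ranks.

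First I check whether surjectivity can hold at all when ranks differ. $R/pR$ has $\Fp$-dimension $[K:\Q]$ and $\tO/p\tO$ has dimension $[\tK:\Q]$, so if $[K:\Q] < [\tK:\Q]$ surjectivity fails immediately by counting dimensions, and we are done. So assume $K=\tK$, hence $\tO=\OO$ and $R$ has finite index in $\OO$, which $p$ divides by hypothesis.

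Next, suppose $R/pR \ra \OO/p\OO$ is surjective. Then $R + p\OO = \OO$, so the finite abelian group $M=\OO/R$ satisfies $pM = M$. A finite abelian group with $pM=M$ has no $p$-torsion, because multiplication by $p$ is then a surjective, hence bijective, endomorphism of a finite set. Thus $p \nmid |M| = [\OO:R]$, a contradiction. Equivalently, I could localize at $p$ and apply Nakayama to the finitely generated $\Z_{(p)}$-module $\OO_{(p)}/R_{(p)}$.

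The main obstacle is bookkeeping rather than mathematics: making precise what ``$p$ divides $[\tO:R]$'' means when $K\ne\tK$, and disposing of that case by the dimension count. Once that is settled, the finite-index case reduces to the elementary fact that $pM=M$ forces $M$ to have order prime to $p$.
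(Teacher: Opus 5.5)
Your proof is correct, and it reaches the conclusion by a slightly different and more careful route than the paper.

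In the finite-index case you work with the cokernel. It is $\tO/(R+p\tO)=M/pM$ with $M=\tO/R$, and $pM=M$ is impossible for a finite group of order divisible by $p$. The paper works with the kernel instead. It picks $x\in\tO\setminus R$ with $px\in R$, so the class of $px$ is a nonzero element of the kernel $(R\cap p\tO)/pR\isom M[p]$. It then turns non-injectivity into non-surjectivity using that $R/pR$ and $\tO/p\tO$ have the same $\Fp$-dimension. Since $|M[p]|=|M/pM|$ for finite $M$, the two arguments are two sides of the same count; yours avoids the detour through injectivity.

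The more substantive difference is your case split. The paper asserts that $R$ and $\tO$ are free $\Z$-modules of the same rank, but this holds only when $K=\tK$, since the ranks are $[K:\Q]$ and $[\tK:\Q]$. When $K\neq\tK$ the index $[\tO:R]$ is infinite. If moreover $p\nmid[\OO:R]$, the element $x$ the paper uses does not exist: $px\in R\subseteq K$ forces $x\in\tO\cap K=\OO$. In that case the map $R/pR\ra\tO/p\tO$ is actually injective, so the paper's route cannot work there. Your dimension count $[K:\Q]<[\tK:\Q]$ settles that case with no hypothesis on $p$ at all, so your version proves the lemma as stated in full generality.

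The gap is harmless for Theorem~\ref{thm:main}. Its conclusion is automatic in the non-Galois case via a nontrivial element of ${\rm Gal}(\tK/K)$, as the paper itself remarks after Corollary~\ref{cor:main}.
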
 
\begin{proof} 
Since $R$ and~$\tO$ are free $\Z$-modules of the same rank, 
$R/pR$ and~$\tO/p \tO$ are vector spaces over~$\Fp$ of the same dimension. 
So it suffices to show that the map $R/pR \ra \tO/p \tO$
is not injective, which is what we shall do.
Since $p$~divides $[\tO:R]$, 
there is an $x \in \tO \backslash R$ such
that $px\in R$.  
Suppose that the  image of~$px$ in~$R/pR$ is zero. 
Then $px = py$ for some~$y \in R$. So $p(x-y)=0$. Since $\tO$ is torsion-free,
we have $x = y$. But $x \not\in R$, while $y \in R$. This contradiction
shows that the  image of~$px$ in~$R/pR$ 
is nonzero; denote this image by~$z$. 
Now under the natural map $R/pR \ra \tO/p \tO$, 
$z$~maps to the image of~$px$ in~$\tO/ p \tO$; this image is~$0$. Thus 
under map $R/pR \ra \tO/p \tO$, the nonzero element~$z$ goes to~$0$, so 
this map is not injective, as was to be shown.
\end{proof}

Take $A = \tO/p \tO$ and take $B$ to be the image of $R/pR$ in $\tO/p \tO$
in the discussion above. So $B$ is smaller than $A$
by Lemma~\ref{lem2}.
Thus by Lemma~\ref{lem3}, two elements of~$S(A) = S(\tO/p \tO)$ map to the same
element of~$S(B)=S(R/p R)$.
Then by Lemma~\ref{lem4}, 
two of the embeddings of~$\tO/p \tO$ in~$\Fpbar$
constructed 
just before that lemma agree on $R/ p R$. 
The theorem essentially follows because one of these two embeddings can be obtained from the other
by applying an appropriate element of~${\rm Gal}(\tK/\Q)$, as we now explain in detail.

From what we said two sentences ago, for some $j,k \in \{1, \ldots, r\}$ and $n, m
\in \{1, \ldots, f\}$, for all $a \in R$, 
we have $\tau_{j,m} \pi_j (a + \pp_1, \ldots, a + \pp_r)
= \tau_{k,n} \pi_k (a + \pp_1, \ldots, a + \pp_r)$,
i.e., 
\begin{eqnarray} \label{eqn3}
\tau_{j,m} (a + \pp_j)
= \tau_{k,n} (a + \pp_k)\ .
\end{eqnarray}

Since $\tau_{j,m}$ and $\tau_{k,n}$ identify 
$\tO/\pp_j$ and $\tO/\pp_k$ respectively each with a copy of 
of~$\F_{p^f}$ in~$\Fpbar$, and there is only one such copy, call it~$F$,
we see that $\tau_{j,m}$ and $\tau_{k,n}$ induce
isomorphisms $\widetilde{\tau}_{j,m}: \tO/\pp_j  \stackrel{\isom}{\ra} F$
and $\widetilde{\tau}_{k,n}: \tO/\pp_k  \stackrel{\isom}{\ra} F$
such that $\widetilde{\tau}_{j,m} (a + \pp_j) = \widetilde{\tau}_{k,n} (a + \pp_k)$
(from equation~(\ref{eqn3})).
Then $\phi := \widetilde{\tau}_{k,n}^{-1} \circ \widetilde{\tau}_{j,m}$
is an isomorphism $\tO/\pp_j  \stackrel{\isom}{\ra} \tO/\pp_k$ 
such that 
\begin{eqnarray} \label{eqn5}
 \phi(a + \pp_j) = a + \pp_k\ . 
\end{eqnarray}

Pick a $\sigma \in {\rm Gal}(\tK/\Q)$ such that $\pp_k = \sigma (\pp_j)$
(we can do this since $\tK/\Q$ is Galois); it induces
an isomorphism $\tO/\pp_j \stackrel{\isom}{\ra} \tO/\pp_k$, which we will again denote by~$\sigma$.
Then 
\begin{eqnarray} \label{eqn6}
\forall b \in \tO, \sigma^{-1}(b + \pp_k) = \sigma^{-1}(b) + \pp_j\ . 
\end{eqnarray}

The automorphism $\sigma^{-1} \circ \phi$ of~$\tO/\pp_j$ satisfies
\begin{eqnarray}
(\sigma^{-1} \circ \phi)(a + \pp_j) & = &   \sigma^{-1}(\phi(a + \pp_j)) \nonumber \\
& = & \sigma^{-1}(a + \pp_k) 
{\rm\ \  (by\ equation~(\ref{eqn5}))} \nonumber \\
& = & \sigma^{-1}(a) + \pp_j 
{\rm\ \  (by\ equation~(\ref{eqn6}))} \label{eqn7}
\end{eqnarray}
This automorphism of~$\tO/\pp_j$ is induced by
by some 
$\tau \in {\rm Gal}(\tK/\Q)$ that
leaves~$\pp_j$ invariant, since the inertia group at~$\pp_j$ is trivial,
as $p$ does not ramify in~$\tO$.
Then 
\begin{eqnarray}
\tau(a + \pp_j)
& = & (\sigma^{-1} \circ \phi)(a + \pp_j)  \nonumber \\
& = & \sigma^{-1}(a) + \pp_j
{\rm\ \  (by\ equation~(\ref{eqn7}))} \label{eqn8}
\end{eqnarray}
Applying $\tau^{-1}$ to~(\ref{eqn8}) , we get
\begin{eqnarray}
a + \pp_j 
& = &\tau^{-1} (\sigma^{-1}(a) + \pp_j)  \nonumber \\
& = &\tau^{-1} (\sigma^{-1}(a)) + \pp_j \nonumber \\
& = & (\tau^{-1} \circ \sigma^{-1})(a) + \pp_j \ . \label{eqn9} 
\end{eqnarray}
Since for all~$n$, we have $a_n(f) \in R$, equation~(\ref{eqn9})
applies with $a = a_n(f)$. 
So we see that $f$ and 
and its conjugate $(\tau^{-1} \circ \sigma^{-1})(f)$ are congruent modulo~$\pp_j$, 
as was left to be shown.

\bibliographystyle{amsalpha}         

\bibliography{biblio}
\end{document}